\def\C{\mathbb C}
\def\R{\mathbb R}
\def\Z{\mathbb Z}
\def\r{\rangle}
\def\l{\langle}
\def\a{\alpha}
\def\w{\omega}
\newcommand{\comment}[1]{}
\newtheorem*{lemma*}{Lemma}
\newtheorem{proposition}{Proposition}
\newtheorem*{conjecture*}{Conjecture}
\theoremstyle{definition}
\newtheorem{definition}{Definition}
\newtheorem{example}{Example}
\newtheorem{remark}{Remark}
\newtheorem*{note*}{Note}
\begin{document}

\begin{flushleft}
\LARGE \bf Orbit functions of ${\rm SU}(n)$ and Chebyshev polynomials
\end{flushleft}

\begin{flushleft} \it
Maryna NESTERENKO~$^\dag$,
Ji\v{r}\'{i} PATERA~$^\ddag$
and
Agnieszka TERESZKIEWICZ~$^\S$
\end{flushleft}

\noindent $^\dag$~Institute of Mathematics of NAS of Ukraine, 3 Tereshchenkivs'ka Str., Kyiv-4, 01601 Ukraine

\noindent $^\ddag$~Centre de recherches math\'ematiques, Universit\'e de Montr\'eal, C.P.6128-Centre ville, Montr\'eal,\\
\phantom{$^\ddag$}~H3C\,3J7,~Qu\'ebec,~Canada

\noindent $^\S$~Institute of Mathematics, University of Bialystok, Akademicka 2, PL-15-267 Bialystok, Poland
\noindent $\phantom{^\S}$~E-mail: maryna@imath.kiev.ua, patera@crm.umontreal.ca, a.tereszkiewicz@uwb.edu.pl

{\vspace{9mm}\par\noindent\hspace*{8mm}\parbox{140mm} 
{\small 
Orbit functions of a simple Lie group/Lie algebra $L$ consist of
exponential functions summed up over the Weyl group of $L$. They are
labeled by the highest weights of irreducible finite dimensional
representations of $L$. They are of three types: $C$-, $S$- and
$E$-functions. Orbit functions of the Lie algebras $A_n$,
or equivalently, of the Lie group ${\rm SU}(n+1)$, are considered.

First, orbit functions in two different bases -- one orthonormal,
the other given by the simple roots of ${\rm SU}(n)$ -- are written using
the isomorphism of the permutation group of $n$ elements and
the Weyl group of ${\rm SU}(n)$.

Secondly, it is demonstrated that there is a one-to-one
correspondence between classical Chebyshev polynomials of the
first and second kind, and $C$- and $S$-functions of the
simple Lie group ${\rm SU}(2)$.

It is then shown that the well-known orbit functions of ${\rm SU}(n)$
are straightforward generalizations of Chebyshev polynomials to
$n-1$ variables. Properties of the orbit functions
provide a wealth of properties of the polynomials.

Finally, multivariate exponential functions are considered,
and their connection with orbit functions of ${\rm SU}(n)$ is established.}\par\vspace{7mm}}

\section{Introduction}\label{Introduction}

The history of the Chebyshev polynomials dates back over a century.
Their properties and applications have been considered in many papers.
We refer to \cite{Shahat,Rivlin1974} as a basic reference.
Studies of polynomials in more than one variable were undertaken
by several authors, namely \cite{Dunkl,DunnLidl1980,EierLidl,Koornwinder1-4,MasonHandscomb2003,suetin,suetin2}.
Of these, none follow the path we have laid down here.

In this paper, we demonstrate that the classical Chebyshev polynomials in one variable are naturally associated with the action of the Weyl group of ${\rm SU}(2)$, or equivalently with the action of the Weyl group $W(A_1)$ of the simple Lie algebra of type $A_1$. The association is so simple  that it has been ignored so far. However, by making
$W(A_1)$ the cornerstone of our rederivation of Chebyshev
polynomials, we have gained insight into the structure of the
theory of polynomials. In particular, the generalization of Chebyshev
polynomials to any number of variables was a straightforward task.
It is based on the Weyl group $W(A_n)$, where $n<\infty$.
This only recently became possible, after the orbit functions of simple
Lie algebras were introduced as useful special functions \cite{Patera}
and studied in great detail and generality \cite{KlimykPatera2006,KlimykPatera2007-1,KlimykPatera2008}.

We proceed in three steps. In Section~2, we exploit the
isomorphism of the group of permutations of~$n+1$ elements ${\rm S}$ and the
Weyl group of~${\rm SU}(n+1)$, or equivalently of~$A_n$, and define the
orbit functions of~$A_n$. This opens the possibility to write the
orbit functions in two rather different bases, the orthnormal basis,
and the basis determined by the simple roots of $A_n$, which
considerably alters the appearance of the orbit functions.
In the paper, we use the non-orthogonal basis because of its
direct generalization to simple Lie algebras of other types than~$A_n$.

In Section~3 we consider classical Chebyshev polynomials of
the first and second kind, and compare them with the $C$- and
$S$-orbit functions of~$A_1$. We show that polynomials of
the first kind are in one-to-one correspondence with
$C$-functions. Polynomials of the second kind coincide with the
appropriate $S$-function divided by the unique lowest non-trivial
$S$-function. We point out that polynomials of the second kind
can be identified as irreducible characters of finite dimensional
representations of ${\rm SU}(2)$. Useful properties of Chebyshev polynomials
can undoubtedly be traced to that identification, because the fundamental object
of representation theory of semisimple Lie groups/algebras is character.
In principle, all one needs to know about an irreducible finite
dimensional representation can be deduced from its character. An
important aspect of this conclusion is that characters are known and
uniformly described for all simple Lie groups/algebras.

In Section~4 we provide details of the recursive procedure from which the analog of the trigonometric form of Chebyshev polynomials in~$n$ variables can be found. Thus there are $n$ generic recursion relations for~$A_n$, having at least $n+2$ terms, and at most
$\left(\begin{smallmatrix}n+1\\ [(n+1)/2]\\\end{smallmatrix}\right)+1$ terms.  Irreducible polynomials are divided into $n+1$ exclusive classes with the property that monomials within one irreducible polynomial belong to the same class. This follows directly from the recognition of the presence and properties of the underlying Lie algebra.

In subsection 4.2, the simple substitution $z=e^{2\pi ix}$, $x\in\R^n$, is used in orbit functions to form analogs of  Chebyshev polynomials in~$n$ variables in their non-trigonometric form.  It is shown that, in the case of 2 variables, our polynomials coincide with those of Koorwider~\cite{Koornwinder1-4}(III), although the approach and
terminology could not be more different, ours being purely algebraic, having originated in Lie theory.

In Section~5, we present the orbit functions of~$A_n$ disguised as polynomials built from multivariate orbit functions of the symmetric group. In Section~2, such a possibility is described in terms of related bases, one orthonormal (symmetric group), the other
non-orthogonal (simple roots of $A_n$ and their dual $\omega$-basis). Both forms of the same polynomials appear rather different but may prove useful in different situations.

The last section contains a few comments and some questions related to the
subject of this paper that we find intriguing.

\section{Preliminaries}

This section is intended to fix notation and terminology. We also briefly recall some facts about ${\rm S}_{n+1}$ and $A_{n}$, dwelling particularly on various bases in $\R^{n+1}$ and
$\R^n$. In Section~\ref{ssec_Weyl_group}, we identify elementary reflections that generate the $A_n$ Weyl group $W$, with the permutation of two adjacent objects in an ordered set of $n+1$ objects. And, finally, we present some standard definitions and properties of orbit functions.

\subsection{Permutation group ${\rm S}_{n+1}$}

The group ${\rm S}_{n+1}$ of order $(n+1)!$ transforms the ordered number
set $[l_1,l_2,\dots,l_n,l_{n+1}]$ by permuting the numbers.

We introduce an orthonormal basis in the real Euclidean space $\R^{n+1}$,
\begin{equation}
{e_i}\in\R^{n+1}\,,\qquad
\l e_i , e_j\r=\delta_{ij}\,,\qquad    1\leq i,j\leq n+1\,,
\end{equation}
and use the $l_k$'s as the coordinates of a point $\mu$ in the $e$-basis:
$$
\mu=\sum_{k=1}^{n+1}l_ke_k\,,\qquad l_k\in\R\,.
$$

The group ${\rm S}_{n+1}$ permutes the coordinates $l_k$ of $\mu$, thus
generating other points from it. The set of all distinct points,
obtained by application of ${\rm S}_{n+1}$ to $\mu$, is called the orbit
of ${\rm S}_{n+1}$. We denote an orbit by $W_\lambda$, where $\lambda$ is
a unique point of the orbit, such that
$$
l_1\geq l_2\geq\cdots\geq l_n\geq l_{n+1}\,.
$$
If there is no pair of equal $l_k$'s in $\lambda$, the orbit $W_\lambda$ consists of
$(n+1)!$ points.

Further on, we will only consider points $\mu$ from the $n$-dimensional subspace
${\mathcal H}\subset\R^{n+1}$ defined by the equation
\begin{gather}\label{plane H}
\sum_{k=1}^{n+1}l_k=0.
\end{gather}

\subsection{Lie algebra~$A_n$}

Let us recall basic properties of the simple Lie algebra~$A_n$ of the compact
Lie group ${\rm SU}(n+1)$. Consider the general value $(1\leq n<\infty)$ of the rank.
The Coxeter-Dynkin diagram, Cartan matrix $\mathfrak{C}$, and inverse Cartan
matrix $\mathfrak{C}^{-1}$ of~$A_n$ are as follows:
\begin{gather*}
\parbox{.6\linewidth}
{\setlength{\unitlength}{1pt}
\def\kr{\circle{10}}
\thicklines
\begin{picture}(20,30)
\put(10,14){\kr}
\put(6,0){$\alpha_1$}
\put(15,14){\line(1,0){10}}
%
\put(30,14){\kr}
\put(26,0){$\alpha_2$}
\put(35,14){\line(1,0){10}}
%
\put(50,14){\kr}
\put(46,0){$\alpha_3$}
\put(55,14){\line(1,0){10}}
%
\put(70,13.5){$\ \,\ldots$}
%
\put(95,14){\line(1,0){10}}
%
\put(110,14){\kr}
\put(104,0){$\alpha_{n\!-\!1}$}
\put(115,14){\line(1,0){10}}
%
\put(130,14){\kr}
\put(126,0){$\alpha_n$}
\end{picture}
}
\hspace{-110 pt}
\mathfrak{C}{=}\left(\begin{smallmatrix}
\phantom{-}2&{-}1&\phantom{-}0&\phantom{-}0&\phantom{-}0&\dots&\phantom{-}0&\phantom{-}0&\phantom{-}0&\phantom{-}0\\
{-}1&\phantom{-}2&{-}1&\phantom{-}0&\phantom{-}0&\dots&\phantom{-}0&\phantom{-}0&\phantom{-}0&\phantom{-}0\\
\phantom{-}0&{-}1&\phantom{-}2&{-}1&\phantom{-}0&\dots&\phantom{-}0&\phantom{-}0&\phantom{-}0&\phantom{-}0\\[-1ex]
\vdots&\vdots&\vdots&\vdots&\vdots&\ddots&\vdots&\vdots&\vdots&\vdots\\[0.5ex]
\phantom{-}0&\phantom{-}0&\phantom{-}0&\phantom{-}0&\phantom{-}0&\dots&\phantom{-}0&{-}1&\phantom{-}2&{-}1\\
\phantom{-}0&\phantom{-}0&\phantom{-}0&\phantom{-}0&\phantom{-}0&\dots&\phantom{-}0&\phantom{-}0&{-}1&\phantom{-}2
\end{smallmatrix}\right),
\\[2ex]
\mathfrak{C}^{-1}=\frac{1}{n+1}\left(\begin{smallmatrix}
1\cdot n\;    &1\cdot(n-1)\;&1\cdot(n-2)\;&1\cdot(n-3)\;&\dots &1\cdot 3\;   &1\cdot 2\;   &1\cdot 1\\
1\cdot (n-1)\;&2\cdot(n-1)\;&2\cdot(n-2)\;&2\cdot(n-3)\;&\dots &2\cdot 3\;   &2\cdot 2\;   &2\cdot 1\\
1\cdot (n-2)\;&2\cdot(n-2)\;&3\cdot(n-2)\;&3\cdot(n-3)\;&\dots &3\cdot 3\;   &3\cdot 2\;   &3\cdot 1\\
1\cdot (n-3)\;&2\cdot(n-3)\;&3\cdot(n-3)\;&4\cdot(n-3)\;&\dots &4\cdot 3\;   &4\cdot 2\;   &4\cdot 1\\[-1 ex]
\vdots        &\vdots       &\vdots       &\vdots       &\ddots&\vdots       &\vdots       &\vdots\\[0.5ex]
1\cdot 3\;    &2\cdot3\;    &3\cdot3\;    &4\cdot3\;    &\dots &(n-2)\cdot3\;&(n-2)\cdot2\;&(n-2)\cdot 1\\
1\cdot 2\;    &2\cdot2\;    &3\cdot2\;    &4\cdot2\;    &\dots &(n-2)\cdot2\;&(n-1)\cdot2\;&(n-1)\cdot 1\\
1\cdot 1\;    &2\cdot1\;    &3\cdot1\;    &4\cdot1\;    &\dots &(n-2)\cdot1\;&(n-1)\cdot1\;&n\cdot 1\
                     \end{smallmatrix}\right).
\end{gather*}

The simple roots $\alpha_i$, $1\le i\le n$ of $A_n$ form a basis ($\a$-basis) of a
real Euclidean space $\R^n$. We choose them in ${\mathcal H}$:
\begin{gather*}
\alpha_i = e_i - e_{i+1}, \quad i=1,\dots,n.
\end{gather*}
This choice fixes the lengths and relative angles of the simple roots.
Their length is equal to~$\sqrt 2$ with relative angles between $\alpha_k$ and
$\alpha_{k+1}$  $(1\leq k\leq n-1)$ equal to~$\frac{2\pi}{3}$, and $\tfrac\pi2$
for any other pair.

In addition to $e$- and $\a$-bases, we introduce the $\w$-basis as the $\Z$-dual
basis to the simple roots~$\alpha_i$:
\begin{gather*}
\l\alpha_i,\w_j\r=\delta_{ij},\quad 1\leq i,j\leq n.
\end{gather*}
It is also a basis in the subspace ${\mathcal H}\subset\R^{n+1}$ (see~(\ref{plane H})).
The bases $\alpha$ and $\w$ are related by the Cartan matrix:
\begin{gather*}
\alpha=\mathfrak{C}\w,\quad \w=\mathfrak{C}^{-1}\alpha.
\end{gather*}

Throughout the paper, we use $\lambda\in\mathcal H$.
Here, we fix the notation for its coordinates relative to the $e$- and $\omega$-bases:
\begin{gather*}\label{notation_bases}
\lambda=\sum_{j=1}^{n+1}l_je_j=:(l_1,\ldots,l_{n+1})_e=\sum_{i=1}^n\lambda_i\w_i=:(\lambda_1,\ldots,\lambda_n)_\w,
\qquad  \sum_{i=1}^{n+1}l_i=0.
\end{gather*}

Consider a point $\lambda\in {\mathcal H}$ with coordinates $l_j$ and $\lambda_i$
in the $e$- and $\w$-bases, respectively. 
Using $\alpha=\mathfrak{C}\w$, i.e. $\w_i=\sum^n_{k=1}(\mathfrak{C}^{-1})_{ik}\alpha_k$,
we obtain the relations between $\lambda_i$ and $l_j$:
\begin{gather*}
l_1=\sum^n_{k=1}\lambda_k\mathfrak{C}^{{-}1}_{k1},\qquad
l_{n{+}1}=-\sum^n_{k=1}\lambda_k\mathfrak{C}^{{-}1}_{kn},
\\
l_j{=}\lambda_{1}(\mathfrak{C}^{{-}1}_{1\,j}{-}\mathfrak{C}^{{-}1}_{1\,j{-}1})
{+}\lambda_{2}(\mathfrak{C}^{{-}1}_{2\,j}{-}\mathfrak{C}^{{-}1}_{2\,j{-}1}){+}\!\cdots\!
{+}\lambda_n(\mathfrak{C}^{{-}1}_{n\,j}{-}\mathfrak{C}^{{-}1}_{n\,j{-}1}),
\quad j=2,\dots,n.
\end{gather*}
or explicitly,
\begin{gather}\label{l_to_lambda}
\lambda_i=l_i-l_{i+1},\qquad i=1,2,\ldots ,n.
\end{gather}
The inverse formulas are much more complicated
\begin{gather}\label{lambda_to_l}
l=A\lambda,
\end{gather}
where $l=(l_1,\dots,1_{n+1})$, $\lambda=(\lambda_1,\dots,\lambda_n)$, and $A$ is the $(n{+}1)\times n$ matrix:
\begin{gather*}
A=\tfrac{1}{n+1}\left(
\begin{smallmatrix}
n&n-1&n-2&\cdots&2&1\\
{-}1&n-1&n-2&\cdots&2&1\\
{-}1&{-}2&n-2&\cdots&2&1\\[-1 ex]
\vdots&\vdots&\vdots&\ddots&\vdots&\vdots\\
{-}1&{-}2&{-}3&\cdots&{-}(n-1)&1\\
{-}1&{-}2&{-}3&\cdots&{-}(n-1)&{-}n
\end{smallmatrix}
\right).
\end{gather*}

\subsection {The Weyl group of $A_n$}\label{ssec_Weyl_group}

The Weyl group $W(A_n)$ of order $(n+1)!$ acts in ${\mathcal H}$ by permuting
coordinates in the $e$-basis, i.e. as the group~${\rm S}_{n+1}$.
Indeed, let $r_i$, $1\le i\le n$ be the generating elements of $W(A_n)$, i.e,
reflections with respect to the hyperplanes perpendicular to $\alpha_i$ and
passing through the origin. Let $x=\sum\limits_{k=1}^{n+1}x_ke_k=(x_1,x_2,\dots,x_{n+1})_e$
and $\langle\cdot,\cdot\rangle$ denote the inner product.
We then have the~reflection by $r_i$:
\begin{equation}
\begin{aligned}
r_i x&= x-\tfrac{2}{\langle \alpha_i,\alpha_i\rangle}\langle x,\alpha_i\rangle\alpha_i
=(x_1,x_2,\dots,x_{n+1})_e-(x_i-x_{i+1})(e_i-e_{i+1})\\
&=(x_1,\dots,x_{i-1},x_{i+1},x_i,x_{i+2},\dots,x_{n+1})_e.
\end{aligned}
\end{equation}
Such transpositions generate the full permutation group ${\rm S}_{n+1}$.
Thus, $W(A_n)$ is isomorphic to ${\rm S}_{n+1}$, and the points
of the orbit $W_\lambda({\rm S}_{n+1})$ and $W_\lambda(A_n)$ coincide.

\subsection{Definitions of orbit functions}\label{ssec_def_orb_func}

The notion of an orbit function in $n$ variables depends essentially on
the underlying semisimple Lie group $G$ of rank $n$.
In our case, $G={\rm SU}(n+1)$ (equivalently, Lie algebra $A_n$).
Let the basis of the simple roots be denoted by
$\alpha$, and the basis of fundamental weights by~$\omega$.

The \textit{weight lattice} $P$ is formed by all integer linear combinations of the $\omega$-basis,
\begin{gather*}
P=\Z\omega_1+\Z\omega_2+\cdots+\Z\omega_n.
\end{gather*}
In the weight lattice $P$, we define the \textit{cone of dominant weights}
$P^+$ and its subset of strictly dominant weights $P^{++}$
\begin{gather*}
P\;\supset\; P^+=\Z^{\ge 0}\omega_1+\cdots+\Z^{\ge 0}\omega_n
\;\supset\; P^{++}=\Z^{>0}\omega_1+\cdots+\Z^{>0}\omega_n.
\end{gather*}

Hereafter, $W^e\subset W$ denotes the \textit{even subgroup} of the Weyl group
formed by an even number of reflections that generate $W$.
$W_{\lambda}$ and $W^e_{\lambda}$ are the corresponding group orbits of a point $\lambda\in \R^n$.

We also introduce the notion of fundamental region $F(G)\subset \R^n$.
For $A_n$ the \emph{fundamental region} $F$ is the convex hull of the vertices
$\{0,\omega_1,\omega_2,\ldots, \omega_n\}$.

\begin{definition}
The $C$ orbit function $C_{\lambda}(x)$, $\lambda\in P^+$ is defined as
\begin{gather}\label{def_c-function1}
C_\lambda(x) := \sum_{\mu\in W_\lambda(G)} e^{2\pi i \l\mu, x\r},
\qquad
x\in\R^n.
\end{gather}
\end{definition}

\begin{definition}
The $S$ orbit function $S_{\lambda}(x)$, $\lambda\in P^{++}$ is defined as
\begin{gather}\label{def_s-function1}
S_\lambda(x) := \sum_{\mu\in W_\lambda(G)} (-1)^{p(\mu)}e^{2\pi i \l\mu , x\r},\qquad
x\in\R^n,
\end{gather}
where $p(\mu)$ is the number of
reflections necessary to obtain $\mu$ from $\lambda$.
Of course the same $\mu$ can be obtained by different successions of reflections,
but all routes from $\lambda$ to $\mu$ will have a length of the same
parity, and thus the salient detail given by $p(\mu)$, in the
context of an $S$-function, is meaningful and unchanging.
\end{definition}

\begin{definition}
We define $E$ orbit function $E_{\lambda}(x)$, $\lambda\in P^e$ as
\begin{gather}\label{def_e-function1}
E_\lambda(x) := \sum_{\mu\in W^e_{\lambda}(G)} e^{2\pi i \l\mu, x\r},
\qquad
x\in\R^n,
\end{gather}
where $P^e:=P^+\cup r_i P^+$ and $r_i$ is a reflection from $W$.
\end{definition}

If we always suppose that $\lambda, \mu\in P$ are given in the $\w$-basis,
and $x\in\R^n$ is given in the $\alpha$ basis, namely
$\lambda=\sum\limits^n_{j=1}\lambda_j\w_j$,
$\mu=\sum\limits^n_{j=1}\mu_j\w_j$, $\lambda_j, \mu_j\in\Z$ and
$x=\sum\limits^n_{j=1}x_j\alpha_j$, $x_j\in \R$,
then the orbit functions of~$A_n$ have the following forms
\begin{gather}
C_\lambda(x)
= \sum_{\mu\in W_\lambda} e^{2\pi i \sum\limits^n_{j=1}\mu_jx_j}
= \sum_{\mu\in W_\lambda} \prod\limits^n_{j=1} e^{2\pi i \mu_jx_j},\label{def_c-function2}
\\
S_\lambda(x)
= \sum_{\mu\in W_\lambda} (-1)^{p(\mu)}e^{2\pi i \sum\limits^n_{j=1}\mu_jx_j}
= \sum_{\mu\in W_\lambda} (-1)^{p(\mu)}\prod\limits^n_{j=1} e^{2\pi i \mu_jx_j},\label{def_s-function2}
\\
E_\lambda(x)
= \sum_{\mu\in W^e_{\lambda}} e^{2\pi i \sum\limits^n_{j=1}\mu_jx_j}
= \sum_{\mu\in W^e_{\lambda}} \prod\limits^n_{j=1} e^{2\pi i \mu_jx_j}.\label{def_e-function2}
\end{gather}

\subsection{Some properties of orbit functions}

For $S$ functions, the number of summands is always equal to the size
of the Weyl group. Note that in the 1-dimensional case, $C$-, $S$-
and $E$-functions are respectively a cosine, a sine and an
exponential functions up to the constant.

All three families of orbit functions are based on semisimple Lie algebras.
The number of variables coincides with the rank of the Lie algebra.
In general, $C$-, $S$- and $E$- functions are finite sums of
exponential functions. Therefore they are continuous and have
continuous derivatives of all orders in $\R^n$.

The $S$-functions are antisymmetric with respect to the
$(n{-}1)$-dimensional boundary of~$F$. Hence they are zero on the
boundary of~$F$. The $C$-functions are symmetric with respect to the
$(n-1)$-dimensional boundary of~$F$. Their normal derivative at the
boundary is equal to zero (because the normal derivative of a $C$-function
is an $S$-function).

For simple Lie algebras of any type,
the functions $C_\lambda(x)$, $E_\lambda(x)$ and $S_\lambda(x)$ are eigenfunctions
of the appropriate Laplace operator. The Laplace operator has the same eigenvalues
on every exponential function summand of an orbit function with
eigenvalue~$-4\pi\langle \lambda,\lambda\rangle$.

\subsubsection{Orthogonality}

For any two complex squared integrable functions $\phi(x)$ and $\psi(x)$
defined on the fundamental region $F$, we define a continuous scalar product
\begin{gather}\label{def_cont_scalar_product}
\l\phi(x),\psi(x)\r:=\int\limits_{{F}}\phi(x)\overline{\psi(x)}{\rm d}x.
\end{gather}
Here, integration is carried out with respect to the Euclidean measure,
the bar means complex conjugation and $x\in {F}$, where ${F}$ is the fundamental
region of either $W$ or $W^e$ (note that the fundamental region of $W^e$ is
$F^e=F\cup r_i F$, where $r_i\in W$).

Any pair of orbit functions from the same family is orthogonal on the corresponding
fundamental region with respect to the scalar product~(\ref{def_cont_scalar_product}), namely
\begin{gather}\label{cont_orthog c funcs}
\l C_{\lambda}(x),C_{\lambda'}(x)\r=|W_\lambda|\cdot|F|\cdot\delta_{\lambda\lambda'},
\\\label{cont_orthog s funcs}
\l S_{\lambda}(x),S_{\lambda'}(x)\r=|W|\cdot|F|\cdot\delta_{\lambda\lambda'},
\\\label{cont_orthog e funcs}
\l E_{\lambda}(x),E_{\lambda'}(x)\r=|W^e_{\lambda}|\cdot|F^e|\cdot\delta_{\lambda\lambda'},
\end{gather}
where $\delta_{\lambda\lambda'}$ is the Kronecker delta, $|W|$ is the order of the Weyl group,
$|W_{\lambda}|$ and $|W^e_{\lambda}|$ are the sizes of the Weyl group orbits (the number of distinct  points in the orbit), and $|F|$ and $|F^e|$ are volumes of fundamental regions. The volume $|F|$  was calculated in~\cite{HrivnakPatera2009}.
\begin{proof}
Proof of the relations~(\ref{cont_orthog c funcs},\ref{cont_orthog s funcs},\ref{cont_orthog e funcs})  follows from the orthogonality of the usual exponential functions and from the fact that a given weight $\mu\in P$ belongs to precisely one orbit function.
\end{proof}

The families of $C$-, $S$- and $E$-functions are complete on the fundamental domain.
The completeness of these systems follows from the completeness of the system of
exponential functions; i.e., there does not exist a function $\phi(x)$, such that
$\langle\phi(x),\phi(x)\rangle>0$, and at the same time $\langle\phi(x),\psi(x)\rangle=0$
for all functions $\psi(x)$ from the same system.

\subsubsection{Orbit functions of $A_n$ acting in $\R^{n+1}$}\label{orb_func_n+1}

Relations~(\ref{lambda_to_l}) allow us to rewrite variables $\lambda$ and $x$ in an orbit function
in the $e$-basis. Therefore we can obtain the $C$-, $S$- and $E$- functions acting in $\R^{n+1}$
\begin{gather}
C_\lambda(x) = \sum_{s\in {\rm S}_{n+1}} e^{2\pi  i  (s(\lambda), x)},\\
C_\lambda(x) = \sum_{s\in {\rm S}_{n+1}} ({\rm sgn}\,s)e^{2\pi  i  (s(\lambda), x)},\\
E_\lambda(x) = \sum_{s\in  {\rm Alt}_{n{+}1}}e^{2\pi  i (s(\lambda), x)},
\end{gather}
where $(\cdot\,,\cdot)$ is a scalar product in $\R^{n+1}$, ${\rm sgn}\,s$ is the permutation sign,
and ${\rm Alt}_{n{+}1}$ is the alternating group acting on an $(n+1)$-tuple of numbers.
Note that variables $x$ and $\lambda$ are in the hyperplane~$\mathcal{H}$.

Using the identity $\l\lambda,r_ix\r=\l r_i\lambda,x\r$ for the reflection $r_i$, $i=1,\ldots,n$,
it can be verified that
\begin{gather}\label{ri_symm_of_C_S}
C_\lambda(r_ix)=C_{r_i\lambda}(x)=C_\lambda(x),\quad
\text{and} \quad
{S}_{r_i\lambda}(x)={S}_\lambda(r_ix)=-{S}_\lambda(x).
\end{gather}

Note that it is easy to see for generic points that ${E}_\lambda(x)=\tfrac12\Big({C}_\lambda(x)+{S}_\lambda(x)\Big)$,
and from the relations~(\ref{ri_symm_of_C_S}), we obtain
\begin{gather}\label{ri_symm_of_E}
{E}_{r_i\lambda}(x)={E}_\lambda(r_ix)=\tfrac12\left( {C}_\lambda(x)-{S}_\lambda(x)\right)={E}_{\lambda}(x).
\end{gather}

A number of other properties of orbit functions are presented in~\cite{KlimykPatera2006, KlimykPatera2007-1, KlimykPatera2008}.

\section{Orbit functions and Chebyshev polynomials}\label{sec_A1_and_Chebyshev}

We recall known properties of Chebyshev polynomials \cite{Rivlin1974} in order to be subsequently able
to make an unambiguous comparison between them and the appropriate orbit functions.

\subsection{Classical Chebyshev polynomials}\label{ssec_Classical_Chebyshev}

Chebyshev polynomials are orthogonal polynomials which are usually defined recursively.
One distinguishes between Chebyshev polynomials of the first kind $T_n$:
\begin{gather}
T_0(x)=1,\quad
T_1(x)=x,\quad
T_{n+1}(x)=2xT_n-T_{n-1},\label{def_T0_T1_Tn}\\
\text{hence}\quad
T_2(x)=2x^2-1,\quad
T_3(x)=4x^3-3x, \dots\label{def_T2_T3}
\end{gather}
and Chebyshev polynomials of the second kind $U_n$:
\begin{gather}
U_0(x)=1,\quad
U_1(x)=2x,\quad
U_{n+1}(x)=2xU_n-U_{n-1},\label{def_U0_U1_Un}\\
\text{in particular}\quad
U_2(x)=4x^2-1,\quad
U_3(x)=8x^3-4x, \quad etc.\label{def_U2_U3}
\end{gather}

The polynomials $T_n$ and $U_n$ are of degree $n$ in the variable $x$.
All terms in a polynomial have the parity of $n$.
The coefficient of the leading term of $T_n$ is $2^{n-1}$ and $2^n$ for $U_n$, $n=1,2,3,\dots$.

The roots of the Chebyshev polynomials of the first kind are widely used as nodes
for polynomial interpolation in approximation theory. The Chebyshev polynomials are
a special case of Jacobi polynomials. They are orthogonal with the following weight functions:
\begin{gather}
\int\limits^1_{-1}\frac{1}{\sqrt{1-x^2}}T_n(x)T_m(x){\rm d}x=
\left\{
\begin{array}{l}
0,\quad n\ne m,\\
\pi,\quad n=m=0,\\
\frac{\pi}{2},\quad n=m\ne 0,
\end{array}
\right.
\\
\int\limits^1_{-1}\sqrt{1-x^2}U_n(x)U_m(x){\rm d}x=
\left\{
\begin{array}{l}
0,\quad n\ne m,\\
\frac{\pi}{2},\quad n=m.
\end{array}
\right.
\end{gather}

There are other useful relations between Chebyshev polynomials of the first and second kind.
\begin{gather}
\frac{\rm d}{{\rm d}x} T_n(x)=n U_{n-1}(x),\quad n=1,2,3,\dots \label{differentiation_Tn}
\\
T_n(x)=\frac 12 (U_{n}(x)-U_{n-2}(x)),\quad n=2,3,\dots \label{difference_Un_Un-2}
\\
T_{n+1}(x)=xT_n(x)-(1-x^2)U_{n-1},\quad n=1,2,3,\dots \label{difference_xTn_Un-1}
\\
T_{n}(x)=U_n(x)-xU_{n-1},\quad n=1,2,3,\dots \label{difference_Un_xUn-1}
\end{gather}

\subsubsection{Trigonometric form of Chebyshev polynomials}

Using trigonometric variable $x=\cos y$, polynomials of the first kind become
\begin{gather}\label{trigon_T}
T_n(x)=T_n(\cos y)=\cos(ny), \quad n=0,1,2,\dots
\end{gather}
and polynomials of the second kind are written as
\begin{gather}\label{trigon_U}
U_n(x)=U_n(\cos y)=\frac{\sin((n+1)y)}{\sin y}, \quad n=0,1,2,\dots
\end{gather}

For example, the first few lowest polynomials are
\begin{gather*}
T_0(x)=T_0(\cos y)=\cos(0y)=1, \quad
T_1(x)=T_1(\cos y)=\cos(y)=x, \\
T_2(x)=T_2(\cos y)=\cos(2y)=\cos^2y-\sin^2y=2cos^2y-1=2x^2-1; \quad \\
U_0(x)=U_0(\cos y)=\frac{\sin y}{\sin y}=1, \quad
U_1(x)=U_1(\cos y)=\frac{\sin(2y)}{\sin y}=2\cos y=2x, \\
U_2(x)=U_2(\cos y)=\frac{\sin(3y)}{\sin y}=\frac{\sin(2y)\cos y+\sin y \cos(2y)}{\sin y}
=4\cos^2y-1=4x^2-1.
\end{gather*}

\subsection{Orbit functions of $A_1$ and Chebyshev polynomials}

Let us consider the orbit functions of one variable. There is only one simple Lie algebra of
rank~1, namely $A_1$. Our aim is to build the recursion relations in a way that generalizes
to higher rank groups, unlike the standard relations of the classical theory presented above.

\subsubsection{Orbit functions of $A_1$ and trigonometric form of $T_n$ and $U_n$}

The orbit of $\lambda=m\omega_1$ has two points for $m\ne 0$, namely $W_\lambda=\{(m),(-m)\}$.
The orbit of $\lambda=0$ has just one point, $W_0=\{0\}$.

One-dimensional orbit functions have the form
(see~(\ref{def_c-function2}), (\ref{def_s-function2}), (\ref{def_e-function2}))
\begin{gather}
C_\lambda(x)=e^{2\pi i m x}{+}e^{{-}2\pi i m x}{=}2\cos(2\pi m x){=}2\cos(m y),
\quad \text{where}\quad y=2\pi x,\; m\in\Z^{\geqslant 0};\label{c-func_A1}
\\
S_\lambda(x)=e^{2\pi i m x}{-}e^{-2\pi i m x}{=}2i\sin(2\pi m x){=}2i\sin(m y),
\quad \text{for}\quad m\in\Z^{> 0};\label{s-func_A1}
\\
E_\lambda(x)= e^{2\pi i m x}=y^m,
\quad \text{where}\quad y=e^{2\pi i x}, \; m\in\Z.\label{e-func_A1}
\end{gather}
From~(\ref{c-func_A1}) and~(\ref{trigon_T}) it directly follows that polynomials generated from
$C_m$ functions of $A_1$ are doubled Chebyshev polynomials $T_m$ of the first kind for $m=0,1,2,\dots$.

Analogously, from~(\ref{s-func_A1}) and~(\ref{trigon_U}), it follows that
polynomials $\frac{S_{m+1}}{S_1}$ are Chebyshev polynomials $U_m$ of the second kind for $m=0,1,2,\dots$.

The polynomials generated from $E_m$ functions of $A_1$, form a standard monomial sequence
$y^m$, $m=0,1,2\dots$, which is the basis for the vector space of polynomials.

$C$- and $S$-orbit functions are orthogonal on the interval $F=[0,1]$
(see~(\ref{cont_orthog c funcs}) and~(\ref{cont_orthog s funcs}))
what implies the orthogonality of the corresponding polynomials.

Comparing the properties of one-dimensional orbit functions with properties of Chebyshev polynomials, we conclude that there is a one-to-one correspondence between the Chebyshev polynomials and the orbit functions.

\subsubsection{Orbit functions of $A_1$ and their polynomial form}\label{recursive_polynomials}

In this subsection, we start a derivation of the $A_1$ polynomials in a way which emphasizes the role of the Lie algebra and, more importantly, in a way that directly generalizes to simple Lie algebras  of any rank $n$ and any type, resulting in polynomials of $n$ variables and of a new type for each algebra. In the present case of $A_1$, this leads us to a different normalization of the polynomials and their trigonometric variables than is common for classical Chebyshev polynomials. No new polynomials emerge than those equivalent to Chebyshev polynomials of the first and second kind. Insight is nevertheless gained into the structure of the problem, which, to us, turned out to be of considerable importance. We are inclined to consider the Chebyshev polynomials, in the form derived here, as the canonical polynomials.

The underlying Lie algebra $A_1$ is often denoted $sl(2,\C)$ or $su(2)$. In fact, this case is so simple that the presence of the Lie algebras has never been acknowledged.

The orbit functions of $A_1$ are of two types~(\ref{c-func_A1}) and (\ref{s-func_A1});
in particular, $C_0(x)=2$, and $S_0(x)=0$ for all $x$.

The simplest substitution of variables to transform the orbit functions into polynomials is $y=e^{2\pi i x}$,  monomials in such a polynomial are $y^m$ and $y^{-m}$. Instead, we introduce new (`trigonometric') variables $X$ and $Y$ as follows:
\begin{gather}
X:=C_1(x){=}e^{2\pi i x}{+}e^{{-}2\pi i x}{=}2\cos(2\pi x),\label{def_X}\\
Y:=S_1(x){=}e^{2\pi i x}{-}e^{{-}2\pi i x}{=}2i\sin(2\pi x).\label{def_Y}
\end{gather}
We can now start to construct polynomials recursively in the degrees of $X$ and $Y$, by calculating the products of the appropriate orbit functions. Omitting the dependence on $x$ from the symbols, we have
\begin{gather}
\begin{array}{rlcrl}
X^2 \!\!\!&=C_2+2             &\Longrightarrow\quad&     C_2\!\!\!&=X^2-2,\\
XC_2\!\!\!&=C_3+X             &\Longrightarrow\quad&     C_3\!\!\!&=X^3-3X,\\
XC_m\!\!\!&=C_{m+1}+C_{m-1}   &\Longrightarrow\quad& C_{m+1}\!\!\!&=XC_m-C_{m-1},\quad m\geq3.
\end{array}\label{C_via_X}
\end{gather}
Therefore, we obtain the following recursive polynomial form of the $C$-functions
\begin{gather}\label{C-form_of_Tn}
C_0=2,\quad
C_1=X,\quad
C_2=X^2{-}2,\quad
C_3=X^3{-}3X,\quad
C_4=X^4{-}4X^2+2,\dots .
\end{gather}
After the substitution $z=\tfrac 12 X$ we have
\begin{gather*}
C_0{=}2\cdot1,\quad
C_1{=}2z,\quad
C_2{=}2(2z^2{-}1),\quad
C_3{=}2(4z^3{-}3z),\quad
C_4{=}2(8z^4{-}8x^2+1),\dots.
\end{gather*}
Hence we conclude that $C_m\!=2T_m$, for $m=0,1,\dots$.
\begin{remark}\

In our opinion, the normalization of orbit functions is also more `natural' for the Chebyshev polynomials.  For example, the equality $C_2^2=C_4+2$ does not hold for $T_2$ and $T_4$.
\end{remark}
\begin{remark}\

Each $C_m$ also can be written as a polynomial of degree $m$ in~$X$,$Y$ and~$S_{m-1}$.
It suffices to consider the products $YS_m$, e.g., $C_2=Y^2+2$, $C_3=YS_2+X$, etc.
Equating the polynomials obtained in such a way with the corresponding polynomials from~(\ref{C_via_X}), we obtain a trigonometric identity for each $m$. For example, we find two ways to write $C_2$, one from the product $X^2$ and one from $Y^2$. Equating the two, we get
\begin{gather*}
X^2{-}Y^2=4\quad\Longleftrightarrow\quad \sin^2(2\pi x){+}\cos^2(2\pi x)=1
\end{gather*}
because $Y$ is defined in~(\ref{def_Y}) to be purely imaginary.
\end{remark}

Just as the polynomials representing $C_m$ were obtained above,
it is possible to to find polynomial expressions for $S_m$ for all $m$.

Fundamental relations between the $S$- and $C$- orbit functions follow from the properties
of the character $\chi_m(x)$ of the irreducible representation of $A_1$ of dimension $m+1$.

The character can be written in two ways: as in the Weyl character formula and also as the sum
of appropriate $C$-functions. Explicitly, we have the $A_1$ character:
\begin{gather*}
\chi_m(x)=\frac{S_{m+1}(x)}{S_1(x)}=C_m(x)+C_{m-2}(x)+\cdots+
    \begin{cases}
    C_2(x)+1\quad    &\text {for $m$ even},\\
    C_3(x)+C_1(x)\quad &\text {for $m$ odd}.
    \end{cases}
\end{gather*}
Let us write down a few characters
\begin{gather*}
\chi_0=\tfrac{S_{1}(x)}{S_1(x)}=1,\quad
\chi_1=\tfrac{S_{2}(x)}{S_1(x)}=C_1=X,\quad
\chi_2=\tfrac{S_{3}(x)}{S_1(x)}=C_2+C_0=X^2-1,\\
\chi_3=\tfrac{S_{4}(x)}{S_1(x)}=C_3+C_1=X^3-2X,\quad
\chi_4=\tfrac{S_{5}(x)}{S_1(x)}=C_4+C_2+C_0=X^4-3X^2+1,\dots
\end{gather*}
Again, the substitution $z=\tfrac 12 X$ transforms these polynomials into the Chebyshev polynomials of the second kind $\frac{S_{m+1}}{S_1}=U_m$, \ $m=0,1,\dots$, indeed
\begin{gather*}
\tfrac{S_{1}(x)}{S_1(x)}=1,\quad
\tfrac{S_{2}(x)}{S_1(x)}=2z,\quad
\tfrac{S_{3}(x)}{S_1(x)}=4z^2{-}1,\quad
\tfrac{S_{4}(x)}{S_1(x)}=8z^3{-}4z,\quad
\tfrac{S_{5}(x)}{S_1(x)}=16z^4{-}12z^2{+}1,\dots
\end{gather*}

\begin{remark}\

Note that in the character formula we used $C_0=1$, while above (see~(\ref{def_e-function2}) and
(\ref{C-form_of_Tn})) we used $C_0=2$. It is just a question of normalization of orbit functions.
For some applications/calculations it is convenient to scale orbit functions of non-generic
points on the factor equal to the order of the stabilizer of that point in the Weyl group $W(A_1)$.
\end{remark}

\section{Orbit functions of $A_n$ and their polynomials}

This section proposes two approaches to constructing orthogonal polynomials of $n$
variables based on orbit functions. The first comes from the decomposition of Weyl
orbit products into sums of orbits. Its result is the analog of the trigonometric form
of the Chebyshev polynomials. The second approach is the exponential substitution
in~\cite{KlimykPatera2006}.

\subsection{Recursive construction}

Since the $C$- and $S$- functions are defined for $A_n$ of any rank $n=1,2,3,\dots$, it is natural to take $C$-functions and the ratio of $S$-functions as multidimensional generalizations of Chebyshev polynomials of the first and second kinds respectively
\begin{align*}
T_\lambda(x)&:=C_\lambda(x),\qquad \ x\in\R^n,\\
U_\lambda(x)&:=\tfrac{S_{\lambda+\rho}(x)}{S_{\rho}(x)},\qquad
\rho=\omega_1{+}\omega_2{+}\dots{+}\omega_n=(1,1,\dots,1)_\omega,\quad x\in\R^n,
\end{align*}
where $\lambda$ is one of the dominant weights of $A_n$.

The functions $T_\lambda$ and $U_\lambda$ can be constructed as
polynomials using the recursive scheme proposed in
Section~\ref{recursive_polynomials}. In the $n$-dimensional case of
orbit functions of $A_n$, we start from the $n$ orbit functions
labeled by the fundamental weights,
\begin{gather*}
X_1:=C_{\omega_1}(x),\quad X_2:=C_{\omega_2}(x),\quad\dots,\quad
X_n:=C_{\omega_n}(x)\,,
        \qquad x\in\R^n\,.
\end{gather*}
By multiplying them and decomposing the products into the sum of orbit
functions, we build the polynomials for any $C$- and $S$-function.

The generic recursion relations are found as the decomposition of the products $X_{\w_j}C_{(a_1,a_2.\dots,a_n)}$ with `sufficiently large' $a_1,a_2,\dots,a_n$. Such a recursion relation has  $\left(\begin{smallmatrix}n+1\\ j\\\end{smallmatrix}\right)+1$ terms, where
$\left(\begin{smallmatrix}n+1\\j\\\end{smallmatrix}\right)$ is the size of the orbit of $\w_j$.

An efficient way to find the decompositions is to work with products
of Weyl group orbits, rather than with orbit functions. Their
decomposition has been studied, and many examples have been described in \cite{HLP}.
It is useful to be aware of the congruence class of
each product, because all of the orbits in its decomposition necessarily
belong to that class. The \emph{congruence number} $\#$ of an orbit
$\lambda$ of $A_n$, which is also the congruence number of the orbit
functions $C_\lambda$ and $S_\lambda$, specifies the class. It is
calculated as follows,
\begin{gather}
\#(C_{(a_1,a_2,\dots,a_n)}(x))=\#(S_{(a_1,a_2,\dots,a_n)}(x))=\sum_{k=1}^n ka_k\mod (n+1).
\end{gather}
In particular, each $X_j$, where $j=1,2,\dots,n$, is in its own
congruence class. During the multiplication, congruence numbers add
up $\mod n+1$.

Polynomials in two and three variables originating from orbit functions of the simple
Lie algebras $A_2$, $C_2$, $G_2$, $A_3$, $B_3$, and $C_3$ are obtained in the forthcoming
paper~\cite{NesterenkoPatera2009}.

\subsection{Exponential substitution}

There is another approach to multivariate orthogonal polynomials, which is also based on orbit functions. Such polynomials can be constructed by the continuous and invertible change of variables
\begin{gather}\label{subst_exp}
y_j=e^{2\pi i x_j}, \quad x_j\in\R,\quad j=1,2,\dots,n.
\end{gather}
Consider an $A_n$ orbit function $C_\lambda(x)$, $S_\lambda(x)$ or $E_\lambda(x)$, when $\lambda$ is
given in the $\w$-basis and $x$ is given in the $\alpha$-basis. Each of these functions consists
of summands $\prod\limits^n_{j=1}e^{2\pi i \mu_j x_j}$, where $\mu_j\in\Z$ are coordinates of an
orbit point $\mu$. Then the summand is transformed by (\ref{subst_exp}) into a monomial of the form
$\prod\limits^n_{j=1}y_j^{\mu_j}$. It is convenient to label these polynomials by non-negative
integer coordinates $(m_1,m_2,\dots, m_n)$ of the point $\lambda=m_1\w_1+m_2\w_2+\dots m_n\w_n$ and to
denote the polynomial obtained from the orbit function $C_\lambda$ as $P_{(m_1,\dots, m_n)}^{C}$
(analogously for $S$ and $E$ functions). Polynomials of two variables obtained from the orbit functions
by the substitution~(\ref{subst_exp}) are already described in the literature \cite{Koornwinder1-4}, where
they are derived from very different considerations. The detailed comparison is made in the following example.

\begin{example}
Consider the $A_2$ Weyl orbits of the lower weights $(0,m)_\omega$, $(m,0)_\omega$ and the
orbit of the generic point $(m_1,m_2)_\omega$, $m,m_1,m_2\in\Z^{>0}$
\begin{gather*}
W_{(0,m)}(A_2)=\{(0, m),\, ( {-}m, 0),\, (m, {-}m)\},
\quad W_{(m,0)}(A_2)=\{(m, 0),\, ({-}m, m),\, (0, {-}m)\},
\\
W_{(m_1,m_2)}(A_2)=\{
(m_1, m_2)^+,\ ({-}m_1, m_1{+}m_2)^-,\ (m_1{+}m_2, {-}m_2)^-,
\\
\phantom{W_{(m_1,m_2)}(A_2)=\{}
({-}m_2, {-}m_1)^-,\ ( {-}m_1{-}m_2, m_1)^+,\ (m_2, {-}m_1{-}m_2)^+\}.
\end{gather*}
Suppose $x=(x_1,x_2)$ is given in the $\alpha$-basis, then the orbit functions assume the form
\begin{gather}
\begin{gathered}\label{C_S_for_A2}
C_{(0,0)}(x)=1,\quad
C_{(0,m)}(x)=\overline{C_{(m,0)}(x)}=e^{{-}2\pi i mx_1}{+}e^{2\pi i mx_1}e^{{-}2\pi i mx_2}{+}e^{2\pi i mx_2},
\\
C_{(m_1,m_2)}(x)
=e^{2\pi i m_1x_1}e^{2\pi i m_2x_2}{+}e^{{-}2\pi i m_1x_1}e^{2\pi i (m_1{+}m_2)x_2}
{+}e^{2\pi i (m_1{+}m_2)x_1}e^{{-}2\pi i m_2x_2}{+}
\\ \phantom{C_{(m_1,m_2)}(x)=}
e^{{-}2\pi i m_2x_1}e^{{-}2\pi i m_1x_2}{+}e^{{-}2\pi i (m_1{+}m_2)x_1}e^{2\pi i m_1x_2}
{+}e^{2\pi i m_2x_1}e^{{-}2\pi i (m_1{+}m_2)x_2},
\\
S_{(m_1,m_2)}(x)
=e^{2\pi i m_1x_1}e^{2\pi i m_2x_2}{-}e^{{-}2\pi i m_1x_1}e^{2\pi i (m_1{+}m_2)x_2}
{-}e^{2\pi i (m_1{+}m_2)x_1}e^{{-}2\pi i m_2x_2}{-}
\\ \phantom{S_{(m_1,m_2)}(x)=}
e^{{-}2\pi i m_2x_1}e^{{-}2\pi i m_1x_2}{+}e^{{-}2\pi i (m_1{+}m_2)x_1}e^{2\pi i m_1x_2}
{+}e^{2\pi i m_2x_1}e^{{-}2\pi i (m_1{+}m_2)x_2}.
\end{gathered}
\end{gather}
Using~(\ref{subst_exp}) we have the following corresponding polynomials
\begin{gather*}
P_{(0,0)}^C=1,\qquad
P_{0,m}^C=\overline{P_{0,m}^C}=y_1^{{-}m}{+}Y_1^{m}y_2^{{-}m}{+}y_2^{m},
\\
\begin{gathered}\label{Polyn_c-A2}
P_{(m_1,m_2)}^{C}
=y_1^{m_1}y_2^{m_2}{+}y_1^{-m_1}y_2^{(m_1{+}m_2)}{+}y_1^{(m_1{+}m_2)}y_2^{-m_2}{+}
\\ \phantom{P^{C(A_2)}=}
y_1^{-m_1}y_2^{-m_2}{+}y_1^{-(m_1{+}m_2)}y_2^{m_1}{+}y_1^{m_2}y_2^{-(m_1{+}m_2)},
\end{gathered}
\\
\begin{gathered}\label{Polyn_s-A2}
P_{(m_1,m_2)}^{S}
=y_1^{m_1}y_2^{m_2}{-}y_1^{-m_1}y_2^{(m_1{+}m_2)}{-}y_1^{(m_1{+}m_2)}y_2^{-m_2}{-}
\\ \phantom{P^{S(A_2)})=}
y_1^{-m_1}y_2^{-m_2}{+}y_1^{-(m_1{+}m_2)}y_2^{m_1}{+}y_1^{m_2}y_2^{-(m_1{+}m_2)}.
\end{gathered}
\end{gather*}

The polynomials $e^+$ and $e^-$ given in (2.6)
of~\cite{Koornwinder1-4}(III) coincide with those
in~(\ref{C_S_for_A2}) whenever the correspondence $\sigma=2\pi x_1$,
$\tau=2\pi x_2$ is set up. So, both the orbit functions polynomials
of $A_2$ and $e^{\pm}$ are orthogonal on the interior of
Steiner's hypocycloid.

It is noteworthy that the regular tessellation of the plane by
equilateral triangles considered in~\cite{Koornwinder1-4} is the
standard tiling of the weight lattice of $A_2$. The fundamental
region $R$ of ~\cite{Koornwinder1-4} coincides with the fundamental
region $F(A_2)$ in our notations. The corresponding isometry group
is the affine Weyl group of~$A_2$.

Furthermore, continuing the comparison with the
paper~\cite{Koornwinder1-4}, we want to point out that orbit
functions are eigenfunctions not only of the Laplace operator
written in the appropriate basis, e.g. in $\omega$-basis, the
corresponding eigenvalues bring $-4\pi^2 \l\lambda ,\lambda\r$, where
$\lambda$ is the representative from the dominant Weyl chamber,
which labels the orbit function. This property holds not only for
the Lie algebra $A_n$ and its Laplace operator, but also for the
differential operators built from the elementary symmetric
polynomials, see~\cite{KlimykPatera2006, KlimykPatera2007-1}.
\end{example}

An independent approach to the polynomials in two variables is
proposed in~\cite{suetin2}, and the generalization of classical Chebyshev
polynomials to the case of several variables is also presented
in~\cite{EierLidl}. A detailed comparison would be a major task
because the results are not explicit and contain no examples of
polynomials.

\section{Multivariate exponential functions}

In this section, we consider one more class of special functions, which,
as it will be shown, are closely related to orbit functions of $A_n$.
Such a relation allows us to view orbit functions in the orthonormal basis, and
to represent them in the form of determinants and permanents.
At the same time, we obtain the straightforward procedure for constructing
polynomials from multivariate exponential functions.

\begin{definition}\cite{KlimykPatera2007-3}
For a fixed point $\lambda=(l_1,l_2,\dots,l_{n+1})_e$, such that
$l_1\geq l_2\geq\cdots\geq l_{n+1}$, $\sum\limits_{k=1}^{n+1}l_k=0$,
the symmetric multivariate exponential function $D^+_{\lambda}$ of
$x=(x_1,x_2,\dots,x_{n+1})_e$ is defined as follows
\begin{gather}\label{E+def}
D^+_{\lambda}(x):= {\det}^+ \left(
\begin{array}{cccc}
e^{2\pi i l_1x_1}&e^{2\pi i l_1x_2}&\dots&e^{2\pi i l_1x_{n+1}}\\
e^{2\pi i l_2x_1}&e^{2\pi i l_2x_2}&\dots&e^{2\pi i l_2x_{n+1}}\\[-1ex]
\vdots&\vdots&\ddots&\vdots\\[1 ex]
e^{2\pi i l_{n+1}x_1}&e^{2\pi i l_{n+1}x_2}&\dots&e^{2\pi i l_{n+1}x_{n+1}}
\end{array}
\right).
\end{gather}
Here, ${\det}^+$ is calculated as a conventional determinant,
except that all of its monomial terms are taken with positive sign.
It is also called \emph{permanent}~\cite{Henryk} or \emph{antideterminant}.
\end{definition}

It was shown in~\cite{KlimykPatera2007-3} that it suffices to
consider $D_{\lambda}^+(x)$ on the hyperplane $x\in {\mathcal H}$ (see~(\ref{plane H})).
Furthermore, due to the following property of the permanent
\begin{gather*}
{\det}^+ (a_{ij})_{i,j=1}^m=\sum_{s\in {\rm S}_m}a_{1,s(1)}a_{2,s(2)}\cdots a_{m,s(m)}
=\sum_{s\in {\rm S}_m}a_{s(1),1}a_{s(2),2}\cdots a_{s(m),m}
\end{gather*}
we have
\begin{gather*}
D^+_{\lambda}(x)= \sum_{s\in {\rm S}_{n+1}}e^{2\pi i l_1 x_{s(1)}} \cdots
e^{2\pi i l_m x_{s(n+1)}} =\sum_{s\in {\rm S}_{n+1}}e^{2\pi i
(\lambda,s(x))} =\sum_{s\in {\rm S}_{n+1}}e^{2\pi i (s(\lambda),x)}.
\end{gather*}

\begin{proposition}
For all $\lambda, x\in \mathcal{H}\subset\R^{n+1}$, we have the following connection between
the symmetric multivariate exponential functions in $n+1$ variables, and $C$ orbit functions of $A_n$
$D^+_{\lambda}(x)=kC_\lambda(x)$,
where $k=\tfrac{|W|}{|W_\lambda|}$, $|W|$ and $|W_\lambda|$ are sizes of the Weyl group and Weyl orbit respectively.
In particular, for generic points, $k=1$.
\end{proposition}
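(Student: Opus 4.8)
The plan is to exploit the expansion of $D^+_\lambda(x)$ as a sum over the full permutation group $\mathrm{S}_{n+1}$ that was already derived just above the statement, namely
\[
D^+_\lambda(x)=\sum_{s\in \mathrm{S}_{n+1}} e^{2\pi i (s(\lambda),x)},
\]
and to compare it term-by-term with the $C$ orbit function written in the $e$-basis form from Section~\ref{orb_func_n+1},
\[
C_\lambda(x)=\sum_{\mu\in W_\lambda(G)} e^{2\pi i (\mu,x)}.
\]
Since $W(A_n)\cong \mathrm{S}_{n+1}$ acts on $\mathcal H$ by permuting the $e$-coordinates of $\lambda$, every exponential appearing in $D^+_\lambda$ is of the form $e^{2\pi i (\mu,x)}$ with $\mu\in W_\lambda$; conversely every $\mu\in W_\lambda$ arises this way. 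So the two sums run over the same set of exponentials, and the only issue is multiplicity.

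First I would invoke the orbit--stabilizer theorem. The distinct points of the orbit $W_\lambda$ are in bijection with the cosets $\mathrm{S}_{n+1}/\mathrm{Stab}(\lambda)$, where $\mathrm{Stab}(\lambda)$ is the subgroup of permutations fixing $\lambda$. Hence $|W_\lambda|=|W|/|\mathrm{Stab}(\lambda)|$, so that $k=|W|/|W_\lambda|=|\mathrm{Stab}(\lambda)|$. The key counting step is then that, for each fixed orbit point $\mu\in W_\lambda$, the number of group elements $s\in \mathrm{S}_{n+1}$ with $s(\lambda)=\mu$ equals exactly $|\mathrm{Stab}(\lambda)|$: if $s_0(\lambda)=\mu$ then the full preimage is the coset $s_0\,\mathrm{Stab}(\lambda)$, which has cardinality $|\mathrm{Stab}(\lambda)|$ and is independent of $\mu$. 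Collecting terms in the permutation sum according to the value of $s(\lambda)$ therefore gives
\[
D^+_\lambda(x)=\sum_{s\in \mathrm{S}_{n+1}} e^{2\pi i (s(\lambda),x)}
=|\mathrm{Stab}(\lambda)|\sum_{\mu\in W_\lambda} e^{2\pi i (\mu,x)}
=k\,C_\lambda(x),
\]
which is the claimed identity with $k=|W|/|W_\lambda|$.

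The generic case follows immediately: if $\lambda$ has coordinates $l_1>l_2>\dots>l_{n+1}$ with no repetitions, then $\mathrm{Stab}(\lambda)$ is trivial, $|W_\lambda|=|W|=(n+1)!$, and $k=1$, recovering the last sentence of the statement. I do not expect a genuine obstacle here; the proof is essentially a bookkeeping argument. The one point deserving care is to confirm that the stabilizer in $\mathrm{S}_{n+1}$ of a dominant $\lambda$ really is the naive ``equal-coordinate'' stabilizer — i.e.\ that the Weyl-group stabilizer of a point coincides with the subgroup generated by the reflections fixing it (a standard fact for reflection groups) — so that the orbit--stabilizer count of $|W_\lambda|$ matches the size of the orbit entering $C_\lambda$. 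Granting the isomorphism $W(A_n)\cong \mathrm{S}_{n+1}$ and the identification of orbits established in Section~\ref{ssec_Weyl_group}, this matching is automatic.
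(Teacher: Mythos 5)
Your proof is correct and follows essentially the same route as the paper, which simply cites the definitions of $C_\lambda$ and $D^+_\lambda$ together with the $\R^{n+1}$ (i.e.\ $e$-basis) expressions of Section~2.5.2: both rest on expanding $D^+_\lambda$ as a sum over $\mathrm{S}_{n+1}$ and comparing with the orbit sum defining $C_\lambda$. The only difference is that you make explicit the orbit--stabilizer counting (each orbit point is hit by exactly $|\mathrm{Stab}(\lambda)|=|W|/|W_\lambda|$ permutations), which the paper leaves implicit; this is a welcome filling-in of detail, not a different method.
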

\begin{proof}
Proof follows from the definitions of the functions $C$ and $D^+$ (definitions 1 and 4 respectively)
and properties of orbit functions formulated in Section~\ref{orb_func_n+1}.
\end{proof}

\begin{definition}\cite{KlimykPatera2007-3}
For a fixed point $\lambda=(l_1,l_2,\dots,l_{n+1})_e$, such that
$l_1\geq l_2\geq\cdots\geq l_{n+1}$,
${\sum\limits_{k=1}^{n+1}l_k=0}$, the antisymmetric multivariate
exponential function $D^-_{\lambda}$ of
$x=(x_1,x_2,\dots,x_{n+1})_e\in \mathcal{H}$ is defined as follows
\begin{gather}\label{E+def}
D^-_{\lambda}(x):=\det \left(
\begin{array}{cccc}
e^{2\pi i l_1x_1}&e^{2\pi i l_1x_2}&\dots&e^{2\pi i l_1x_{n+1}}\\
e^{2\pi i l_2x_1}&e^{2\pi i l_2x_2}&\dots&e^{2\pi i l_2x_{n+1}}\\[-1ex]
\vdots&\vdots&\ddots&\vdots\\[1 ex]
e^{2\pi i l_{n+1}x_1}&e^{2\pi i l_{n+1}x_2}&\dots&e^{2\pi i l_{n+1}x_{n+1}}
\end{array}
\right)=\sum_{s\in {\rm S}_{n+1}}({\rm sgn}\ s)e^{2\pi i(s(\lambda),x)},
\end{gather}
where ${\rm sgn}$ is the permutation sign.
\end{definition}

\begin{proposition}
For all generic points $\lambda\in \mathcal{H}\subset\R^{n+1}$, we have the
following connection ${D^-_{\lambda}(x)=S_\lambda(x)}$.

The antisymmetric multivariate exponential functions $D^-$, and $S$
orbit functions, equal zero for non-generic points.
\end{proposition}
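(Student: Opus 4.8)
The plan is to reduce the statement to the representation of $S_\lambda$ in the orthonormal $e$-basis established in Section~\ref{orb_func_n+1}, namely $S_\lambda(x)=\sum_{s\in {\rm S}_{n+1}}({\rm sgn}\,s)\,e^{2\pi i(s(\lambda),x)}$, and to compare it term by term with the expansion $D^-_\lambda(x)=\sum_{s\in {\rm S}_{n+1}}({\rm sgn}\,s)\,e^{2\pi i(s(\lambda),x)}$ obtained from the Leibniz formula and already displayed just after the definition of $D^-$. Once both objects are written as signed sums over ${\rm S}_{n+1}$ of the same exponentials, the whole proof is bookkeeping of the index set and of its signs.

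For a \emph{generic} point $\lambda$ (all coordinates $l_k$ distinct) I would first note that the stabilizer of $\lambda$ in ${\rm S}_{n+1}$ is trivial, so the map ${\rm S}_{n+1}\to W_\lambda$, $s\mapsto s(\lambda)$, is a bijection and the orbit $W_\lambda$ has exactly $(n+1)!$ points, one per permutation. The remaining task is the identification of signs: for $\mu=s(\lambda)$, I must check that the combinatorial parity $(-1)^{p(\mu)}$ in the definition of $S_\lambda$ equals ${\rm sgn}\,s$. Here the isomorphism $W(A_n)\cong {\rm S}_{n+1}$ of Section~\ref{ssec_Weyl_group} is used: the generating reflections $r_i$ are precisely the adjacent transpositions, each of which is odd, so any word of length $p(\mu)$ in the $r_i$ has sign $(-1)^{p(\mu)}$. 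Since ${\rm sgn}$ is a homomorphism, this value depends only on $\mu$ and not on the chosen word, which is exactly the well-definedness of $p(\mu)\bmod 2$ remarked upon in the definition of the $S$-function. With the signs matched, the two sums coincide term by term and $D^-_\lambda=S_\lambda$ on all of $\mathcal H$.

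For a \emph{non-generic} point $\lambda$ some pair of coordinates coincides, say $l_i=l_{i+1}$. Then the matrix in the definition of $D^-_\lambda$ has two identical rows, so its determinant vanishes identically and $D^-_\lambda(x)\equiv 0$. On the orbit-function side, the adjacent transposition $r_i=(i,i+1)$ fixes $\lambda$ while ${\rm sgn}\,r_i=-1$; pairing each $s$ with $s\,r_i$ in the sum $\sum_s({\rm sgn}\,s)\,e^{2\pi i(s(\lambda),x)}$ cancels the two contributions, so $S_\lambda(x)\equiv 0$ as well. Equivalently, $S$-functions are attached only to strictly dominant weights $\lambda\in P^{++}$, which are exactly the generic points, and one sets $S_\lambda=0$ otherwise.

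I do not anticipate a serious obstacle, since both $D^-_\lambda$ and $S_\lambda$ are finite signed sums of the same exponentials. The one point requiring care, and the only place where anything beyond the definitions enters, is the sign identification ${\rm sgn}\,s=(-1)^{p(\mu)}$, which rests on the fact that the Weyl-group generators $r_i$ map to odd permutations under the isomorphism of Section~\ref{ssec_Weyl_group}.
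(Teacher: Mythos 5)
Your proof is correct and follows essentially the same route as the paper, whose own ``proof'' is a one-line appeal to Definitions 2 and 5 together with the $e$-basis expansion $S_\lambda(x)=\sum_{s\in {\rm S}_{n+1}}({\rm sgn}\,s)\,e^{2\pi i(s(\lambda),x)}$ from Section~2.5.2. You have simply supplied the details that reference tacitly relies on -- the bijection ${\rm S}_{n+1}\to W_\lambda$ for generic $\lambda$, the sign identification ${\rm sgn}\,s=(-1)^{p(\mu)}$ via the adjacent-transposition generators, and the two-equal-rows/pairing argument for the non-generic case.
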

\begin{proof}
Proof directly follows from the definitions of functions $S$ and $D^-$
(definitions 2 and 5 respectively), and properties of $S$ functions
formulated in Section~\ref{orb_func_n+1}.
\end{proof}

\begin{definition}~\cite{KlimykPateraAlternatingExp}
The alternating multivariate exponential function
$D^{{\rm Alt}}_\lambda(x)$, for\linebreak
 $x=(x_1,\dots,x_{n+1})_e$, $\lambda=(l_1,\dots,l_{n+1})_e$,
is defined as the function
\begin{gather}\label{sdetD_Alt}
D^{{\rm Alt}}_\lambda(x):={\rm sdet}\left(
\begin{array}{cccc}
e^{2\pi i l_1x_1}    & e^{2\pi i l_1x_2}    & \cdots  &  e^{2\pi i l_1x_{n+1}}\\
e^{2\pi i l_2x_1}    & e^{2\pi i l_2x_2}    & \cdots  &  e^{2\pi i l_2x_{n+1}}\\[-1 ex]
\vdots                     & \vdots                     & \ddots  &  \vdots \\
e^{2\pi i l_{n+1}x_1}& e^{2\pi i l_{n+1}x_2}& \cdots  &  e^{2\pi i l_{n+1}x_{n+1}}
\end{array}
\right),
\end{gather}
where ${\rm Alt}_{n+1}$ is the alternating group (even subgroup of ${\rm S}_{n+1}$) and
\begin{gather*}
{\rm sdet} \left(e^{2\pi i l_jx_k}\right)_{j,k=1}^{n+1}:=\!\!\!
\sum_{w\in {\rm Alt}_{n+1}}\!\!e^{2\pi il_1 x_{w(1)}}e^{2\pi il_2 x_{w(2)}}\cdots e^{2\pi il_{n+1} x_{w(n+1)}}=\!\!\!
\sum_{w\in {\rm Alt}_{n+1}}\!\!e^{2\pi i(\lambda,w(x))}.
\end{gather*}
Here, $(\lambda,x)$ denotes the scalar product in the $(n+1)$-dimensional Euclidean space.
\end{definition}
Note that ${\rm Alt}_{m}$ consists of even substitutions of ${\rm S}_m$, and is usually denoted as $A_{m}$;
here we change the notation in order to avoid confusion with simple Lie algebra $A_n$ notations.

It was shown in~\cite{KlimykPateraAlternatingExp} that it is sufficient to consider the function
$D^{{\rm Alt}}_\lambda(x)$ on the hyperplane~$\mathcal{H}\colon x_1+x_2+\cdots +x_{n+1}=0$ for $\lambda$, such
that $l_1\ge l_2\ge l_3\ge \cdots \ge l_{n+1}$.

Alternating multivariate exponential functions are obviously connected with
symmetric and antisymmetric multivariate exponential functions.
This connection is the same as that of the cosine and sine, with the exponential function of
one variable ${D^{{\rm Alt}}_\lambda(x)=\tfrac 12 (D^{+}_\lambda(x)+D^{-}_\lambda(x))}$.

\begin{proposition}
For all generic points $\lambda\in \mathcal{H}\subset\R^{n+1}$,
the following relation between the alternative multivariate exponential functions~$D^{{\rm Alt}}$
and $E$-orbit functions of $A_n$ holds true: ${D^{{\rm Alt}}_\lambda(x)=E_\lambda(x)}$.

For non-generic points $\lambda$, we have $E_\lambda(x)=C_\lambda(x)$ and, therefore, $E_\lambda(x)=kD^+_{\lambda}(x)$,
where $k=\tfrac{|W|}{|W_\lambda|}$.
\end{proposition}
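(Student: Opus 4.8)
The plan is to establish the Proposition in two regimes, generic and non-generic, exactly paralleling the structure already used for $D^+$ and $D^-$. For a generic point $\lambda\in\mathcal H$, the stabilizer of $\lambda$ in $W\cong{\rm S}_{n+1}$ is trivial, so the even Weyl orbit $W^e_\lambda$ is in bijection with ${\rm Alt}_{n+1}$ via $s\mapsto s(\lambda)$. First I would expand $E_\lambda(x)$ using the $n+1$-variable form from Section~\ref{orb_func_n+1}, writing $E_\lambda(x)=\sum_{s\in{\rm Alt}_{n+1}}e^{2\pi i(s(\lambda),x)}$, which is literally the definition of $D^{\rm Alt}_\lambda(x)$ read off from the ${\rm sdet}$ expansion in Definition~6. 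Thus $D^{\rm Alt}_\lambda(x)=E_\lambda(x)$ is a direct comparison of the two sums once the orbit-to-group-element identification is made, and no computation beyond matching indices is required. I would also remark that this is consistent with the additive relations already recorded, since $E_\lambda=\tfrac12(C_\lambda+S_\lambda)$ and $D^{\rm Alt}_\lambda=\tfrac12(D^+_\lambda+D^-_\lambda)$, and on generic points $D^+_\lambda=C_\lambda$ (Proposition~1 with $k=1$) and $D^-_\lambda=S_\lambda$ (Proposition~2).

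For the non-generic case the key observation is that when $\lambda$ has a nontrivial stabilizer, some coordinates $l_j$ coincide, so $S_\lambda(x)=0$ by Proposition~2 and hence $E_\lambda=\tfrac12(C_\lambda+S_\lambda)=\tfrac12 C_\lambda$ would seem to fail the claimed identity $E_\lambda=C_\lambda$. The correct accounting is that for a non-generic $\lambda$ the even orbit $W^e_\lambda$ coincides with the full orbit $W_\lambda$: every point of $W_\lambda$ is reachable by an even element of $W$ because the nontrivial stabilizer contains an odd reflection, so each coset representative can be adjusted in parity. Consequently $E_\lambda(x)=\sum_{\mu\in W^e_\lambda}e^{2\pi i\langle\mu,x\rangle}=\sum_{\mu\in W_\lambda}e^{2\pi i\langle\mu,x\rangle}=C_\lambda(x)$, which gives the stated $E_\lambda(x)=C_\lambda(x)$. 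Combining this with Proposition~1, which asserts $C_\lambda(x)=kD^+_\lambda(x)$ with $k=|W|/|W_\lambda|$ for non-generic $\lambda$, yields $E_\lambda(x)=kD^+_\lambda(x)$ as claimed.

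The main obstacle I anticipate is the non-generic parity argument: one must verify carefully that when $\mathrm{Stab}_W(\lambda)$ is nontrivial it necessarily contains an element of odd length (equivalently, a reflection), so that the even and full orbits genuinely coincide. For ${\rm SU}(n+1)$ this is transparent, since the stabilizer of a point with repeated coordinates is a Young-type subgroup generated by transpositions of equal entries, and any such transposition is an odd permutation; hence for every $\mu\in W_\lambda$ both an even and an odd group element send $\lambda$ to $\mu$, forcing $W^e_\lambda=W_\lambda$. I would therefore phrase the proof so that this stabilizer-contains-a-transposition fact is stated explicitly and the remaining identities follow by direct substitution into the definitions, citing Propositions~1 and~2 and the even-subgroup discussion of Section~\ref{orb_func_n+1}.
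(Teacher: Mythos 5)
Your core argument is sound and in fact supplies exactly the details that the paper's one-line proof (which merely cites Definitions 3 and 6, the relation $E=\tfrac12(C+S)$, and Section~\ref{orb_func_n+1}) leaves implicit. For generic $\lambda$, the trivial stabilizer makes $s\mapsto s(\lambda)$ a bijection from ${\rm Alt}_{n+1}$ onto $W^e_\lambda$, so the sum for $E_\lambda$ in Section~\ref{orb_func_n+1} and the ${\rm sdet}$ expansion of $D^{{\rm Alt}}_\lambda$ agree term by term (using that ${\rm Alt}_{n+1}$ is closed under inversion, so $(\lambda,w(x))$ can be traded for $(w(\lambda),x)$). For non-generic $\lambda$, your stabilizer argument is the right one and is genuinely needed: the stabilizer is a Young subgroup generated by transpositions of equal coordinates, hence contains an odd element, hence $W^e_\lambda=W_\lambda$ and $E_\lambda=C_\lambda$. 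You are also correct that the identity $E_\lambda=\tfrac12(C_\lambda+S_\lambda)$, which the paper's proof invokes, holds only at generic points and would give the false $E_\lambda=\tfrac12 C_\lambda$ if applied naively at non-generic ones; flagging this is a real improvement over the paper.

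However, your final step contains a genuine error in the constant. Proposition 1 states $D^+_\lambda(x)=kC_\lambda(x)$ with $k=\tfrac{|W|}{|W_\lambda|}$ (each distinct orbit point is repeated $|{\rm Stab}_W(\lambda)|=\tfrac{|W|}{|W_\lambda|}$ times in the permanent's sum over all of ${\rm S}_{n+1}$), not $C_\lambda(x)=kD^+_\lambda(x)$ as you quote it. Combining the correctly quoted Proposition 1 with $E_\lambda=C_\lambda$ gives
\begin{gather*}
D^+_\lambda(x)=kE_\lambda(x),\qquad\text{i.e.}\qquad E_\lambda(x)=\tfrac{|W_\lambda|}{|W|}\,D^+_\lambda(x),
\end{gather*}
which is the reciprocal of what the proposition (and your proof) asserts; since $k>1$ at non-generic points, $E_\lambda=kD^+_\lambda$ and $D^+_\lambda=kC_\lambda$ cannot both hold. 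In other words, the factor in the printed statement is inverted, and your misquotation of Proposition 1 silently cancels that typo instead of exposing it. A careful write-up should derive $D^+_\lambda=kE_\lambda$ and note the discrepancy with the statement as printed, rather than adjust the premise to force agreement.
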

\begin{proof}
Proof directly follows from definitions 3 and 6, from the relation $E=\tfrac 12(C+S)$, and from the properties of orbit functions
formulated in Section~\ref{orb_func_n+1}.
\end{proof}

\section{Concluding remarks}
\begin{enumerate}
\item
Consequences of the identification of W-invariant orbit functions of compact simple Lie groups
and multivariable Chebyshev polynomials merit further exploitation. It is conceivable that Lie
theory may become a backbone of a segment of the theory of orthogonal polynomials of many variables.

Some of the properties of orbit functions translate readily into
properties of Chebyshev polynomials of many variables. However there
are other properties whose discovery from the theory of
polynomials is difficult to imagine. As an example, consider the
decomposition of the Chebyshev polynomial of the second kind into
the sum of Chebyshev polynomials of the first kind. In one variable,
it is a familiar problem that can be solved by elementary
means. For two and more variables, the problem turns out to be
equivalent to a more general question about representations of
simple Lie groups. In general the coefficients of that sum are the
dominant weight multiplocities. Again, simple specific cases can be
worked out, but a sophisticated algorithm is required to deal with
it in general \cite{MP1982}. In order to provide a solution for such a problem,
extensive tables have been prepared \cite{BMP} (see also references therein).
\smallskip

\item
Our approach to the derivation of multidimensional orthogonal
polynomials hinges on the knowledge of appropriate recursion
relations. The basic mathematical property underlying the existence
of the recursion relation is the complete decomposability of
products of the orbit functions. Numerous examples of the
decompositions of products of orbit functions, involving also other
Lie groups than ${\rm SU}(n)$, were shown elsewhere
\cite{KlimykPatera2006,KlimykPatera2007-1}. An equivalent problem is
the decomposition of products of Weyl group orbits \cite{HLP}.
\smallskip

\item
Possibility to discretize the polynomials is a consequence of the known
discretization of orbit functions. For orbit functions it is a
simpler problem, in that it is carried out in the real Euclidean space $\R^n$.
In principle, it carries over to the polynomials. But variables
of the polynomials happen to be on the maximal torus of the underlying
Lie group. Only in the case of $A_1$, the variables are real (the imaginary unit
multiplying the $S$-functions can be normalized away). For $A_n$ with
$n>1$ the functions are complex valued. Practical aspects of
discretization deserve to be thoroughly investigated.
\smallskip

\item
For simplicity of formulation, we insisted throughout this paper
that the underlying Lie group be simple. The extension to compact
semisimple Lie group and their Lie algebras is straightforward. Thus,
orbit functions are products of orbit functions of simple
constituents, and different types of orbit functions can be mixed.
\smallskip

\item
Polynomials formed from $E$-functions by the same substitution of
variables should be equally interesting once $n>1$. We know of no
analogs of such polynomials in the standard theory of polynomials
with more than one variable. Intuitively, they would be formed as
`halves' of Chebyshev polynomials although their domain of
orthogonality is twice as large as that of Chebyshev polynomials
\cite{KlimykPatera2008}.

\smallskip

\item
Orbit functions have many other properties
\cite{KlimykPatera2006,KlimykPatera2007-1,KlimykPatera2008} that can
now be rewritten as properties of Chebyshev polynomials. Let us
point out just that they are eigenfunctions of appropriate Laplace
operators with known eigenvalues.
\smallskip

\item
Notions of multivariate trigonometric functions \cite{KP2007} lead
us to the idea of new, yet to be defined classes of $W$-orbit
functions based on trigonometric sine and cosine functions, hence
also to new types of polynomials.
\smallskip

\item
Analogs of orbit functions of Weyl groups can be introduced also
for the finite Coxeter groups that are not Weyl groups of a
simple Lie algebra. Many of the properties of orbit functions extend
to these cases. Only their orthogonality, continuous or discrete,
has not been shown so far.
\smallskip

\item
Our choice of the $n$ dimensional subspace $\mathcal H$ in
$\R^{n+1}$ by requirement \eqref{plane H}, is not the only
possibility. A reasonable alternative appears to be setting
$l_{n+1}=0$ (orthogonal projection on $\R^n$).

\end{enumerate}

\subsection*{Acknowledgements}

The work was partially supported by the Natural Science and
Engineering Research Council of Canada. MITACS, and by the MIND
Research Institute, Calif. Two of us, M.N. and A.T., are grateful
for the hospitality extended to them at the Centre de Recherches
math\'ematiques, Universit\'e de Montr\'eal, where the work was
carried out.

The authors are grateful to A. Kiselev and the anonymous Journal referee
for critical remarks and comments on the previous version of
this paper.

\end{document}